\documentclass[11pt,leqno]{article}
\usepackage[margin=1in]{geometry} 
\usepackage{amssymb,amsfonts,amsmath,bbm,mathrsfs,stmaryrd,mathtools}
\usepackage{xcolor}
\usepackage{url}

\usepackage{graphicx}

\usepackage{accents}

\usepackage{extarrows}

\usepackage[shortlabels]{enumitem}
\usepackage{tensor}

\usepackage{xr}
\usepackage[T1]{fontenc}
\usepackage[utf8]{inputenc}

\usepackage[colorlinks,
linkcolor=black!75!red,
citecolor=blue,
pdftitle={},
pdfproducer={pdfLaTeX},
pdfpagemode=None,
bookmarksopen=true,
bookmarksnumbered=true,
backref=page]{hyperref}

\usepackage{tikz}
\usetikzlibrary{arrows,calc,decorations.pathreplacing,decorations.markings,decorations.shapes,intersections,shapes.geometric,through,fit,shapes.symbols,positioning,decorations.pathmorphing}

\makeatletter
\newlength\zig@L
\newlength\zig@La
\newlength\zig@Lb

\newcommand{\xzigrightarrow}[2][]{%
  \mathrel{%
    \settowidth{\zig@La}{$\scriptstyle #2$}%
    \settowidth{\zig@Lb}{$\scriptstyle #1$}%
    \zig@L=\zig@La\relax
    \ifdim\zig@Lb>\zig@L \zig@L=\zig@Lb\fi
    \advance\zig@L by 2.2em\relax
    \tikz[baseline=-0.65ex]{%
      \draw[->,
            line cap=round,
            decorate,
            decoration={zigzag,segment length=4pt,amplitude=1.1pt}]%
        (0,0) -- (\zig@L,0)
        node[midway,above=2pt] {$\scriptstyle #2$}%
        \if\relax\detokenize{#1}\relax\else
          node[midway,below=2pt] {$\scriptstyle #1$}%
        \fi
      ;
    }%
  }%
}
\makeatother

\makeatletter
\newcommand{\squigjoin}{1mu} 

\def\sqleft@{\sim}                    
\def\sqmid@{\sim\mkern-\squigjoin}    

\def\rightsquigarrowfill@{%
  \arrowfill@{\sqleft@}{\sqmid@}{\mkern-4mu\succ}%
}

\newcommand{\xrightsquigarrow}[2][]{%
  \ext@arrow 0359\rightsquigarrowfill@{#1}{#2}%
}
\makeatother

\makeatletter
\newcommand*\circled[1]{\tikz[baseline=(char.base)]{
    \node[shape=circle, draw, inner sep=0pt, 
    minimum height={\f@size},] (char) {\vphantom{WAH1g}#1};}}
\makeatother

\makeatletter
\DeclareRobustCommand\widecheck[1]{{\mathpalette\@widecheck{#1}}}
\def\@widecheck#1#2{%
    \setbox\z@\hbox{\m@th$#1#2$}%
    \setbox\tw@\hbox{\m@th$#1%
       \widehat{%
          \vrule\@width\z@\@height\ht\z@
          \vrule\@height\z@\@width\wd\z@}$}%
    \dp\tw@-\ht\z@
    \@tempdima\ht\z@ \advance\@tempdima2\ht\tw@ \divide\@tempdima\thr@@
    \setbox\tw@\hbox{%
       \raise\@tempdima\hbox{\scalebox{1}[-1]{\lower\@tempdima\box
\tw@}}}%
    {\ooalign{\box\tw@ \cr \box\z@}}}
\makeatother

\usepackage{braket}

\usepackage[amsmath,thmmarks,hyperref]{ntheorem}
\usepackage{cleveref}

\newcommand\nthalias[1]{\AddToHook{env/#1/begin}{\crefalias{lemma}{#1}}}

\nthalias{definition}
\nthalias{example}
\nthalias{examples}
\nthalias{remark}
\nthalias{remarks}
\nthalias{convention}
\nthalias{notation}
\nthalias{construction}
\nthalias{sketch}
\nthalias{theoremN}
\nthalias{propositionN}
\nthalias{corollaryN}
\nthalias{lemma}
\nthalias{proposition}
\nthalias{corollary}
\nthalias{theorem}
\nthalias{conjecture}
\nthalias{question}
\nthalias{assumption}

\creflabelformat{enumi}{#2#1#3}

\crefname{section}{Section}{Sections}
\crefformat{section}{#2Section~#1#3} 
\Crefformat{section}{#2Section~#1#3} 

\crefname{subsection}{\S}{\S\S}
\AtBeginDocument{%
  \crefformat{subsection}{#2\S#1#3}%
  \Crefformat{subsection}{#2\S#1#3}%
}

\crefname{subsubsection}{\S}{\S\S}
\AtBeginDocument{%
  \crefformat{subsubsection}{#2\S#1#3}%
  \Crefformat{subsubsection}{#2\S#1#3}%
}

\theoremstyle{plain}

\newtheorem{lemma}{Lemma}[section]

\newtheorem{corollary}[lemma]{Corollary}
\newtheorem{theorem}[lemma]{Theorem}

\newtheorem{question}[lemma]{Question}

\theoremstyle{plain}
\theoremnumbering{Alph}

\theoremstyle{plain}
\theorembodyfont{\upshape}
\theoremsymbol{\ensuremath{\blacklozenge}}

\newtheorem{remarks}[lemma]{Remarks}

\crefname{definition}{definition}{definitions}
\crefformat{definition}{#2definition~#1#3} 
\Crefformat{definition}{#2Definition~#1#3} 

\crefname{ex}{example}{examples}
\crefformat{example}{#2example~#1#3} 
\Crefformat{example}{#2Example~#1#3} 

\crefname{exs}{example}{examples}
\crefformat{examples}{#2example~#1#3} 
\Crefformat{examples}{#2Example~#1#3} 

\crefname{remark}{remark}{remarks}
\crefformat{remark}{#2remark~#1#3} 
\Crefformat{remark}{#2Remark~#1#3} 

\crefname{remarks}{remark}{remarks}
\crefformat{remarks}{#2remark~#1#3} 
\Crefformat{remarks}{#2Remark~#1#3} 

\crefname{convention}{convention}{conventions}
\crefformat{convention}{#2convention~#1#3} 
\Crefformat{convention}{#2Convention~#1#3} 

\crefname{notation}{notation}{notations}
\crefformat{notation}{#2notation~#1#3} 
\Crefformat{notation}{#2Notation~#1#3} 

\crefname{table}{table}{tables}
\crefformat{table}{#2table~#1#3} 
\Crefformat{table}{#2Table~#1#3}

\crefname{lemma}{lemma}{lemmas}
\crefformat{lemma}{#2lemma~#1#3} 
\Crefformat{lemma}{#2Lemma~#1#3} 

\crefname{proposition}{proposition}{propositions}
\crefformat{proposition}{#2proposition~#1#3} 
\Crefformat{proposition}{#2Proposition~#1#3} 

\crefname{propositionN}{proposition}{propositions}
\crefformat{propositionN}{#2proposition~#1#3} 
\Crefformat{propositionN}{#2Proposition~#1#3} 

\crefname{corollary}{corollary}{corollaries}
\crefformat{corollary}{#2corollary~#1#3} 
\Crefformat{corollary}{#2Corollary~#1#3} 

\crefname{corollaryN}{corollary}{corollaries}
\crefformat{corollaryN}{#2corollary~#1#3} 
\Crefformat{corollaryN}{#2Corollary~#1#3} 

\crefname{theorem}{theorem}{theorems}
\crefformat{theorem}{#2theorem~#1#3} 
\Crefformat{theorem}{#2Theorem~#1#3} 

\crefname{theoremN}{theorem}{theorems}
\crefformat{theoremN}{#2theorem~#1#3} 
\Crefformat{theoremN}{#2Theorem~#1#3} 

\crefname{enumi}{}{}
\crefformat{enumi}{#2#1#3}
\Crefformat{enumi}{#2#1#3}

\crefname{assumption}{assumption}{Assumptions}
\crefformat{assumption}{#2assumption~#1#3} 
\Crefformat{assumption}{#2Assumption~#1#3} 

\crefname{construction}{construction}{Constructions}
\crefformat{construction}{#2construction~#1#3} 
\Crefformat{construction}{#2Construction~#1#3} 

\crefname{sketch}{sketch}{Sketches}
\crefformat{sketch}{#2sketch~#1#3} 
\Crefformat{sketch}{#2Sketch~#1#3} 

\crefname{question}{question}{Questions}
\crefformat{question}{#2question~#1#3} 
\Crefformat{question}{#2Question~#1#3} 

\crefname{equation}{}{}
\crefformat{equation}{(#2#1#3)} 
\Crefformat{equation}{(#2#1#3)}

\numberwithin{equation}{section}

\theoremstyle{nonumberplain}
\theoremsymbol{\ensuremath{\blacksquare}}

\newtheorem{proof}{Proof}
\newcommand\pf[1]{\newtheorem{#1}{Proof of \Cref{#1}}}

\newcommand\cM{{\mathcal M}}

\DeclareMathOperator{\id}{id}

\newcommand{\comment}[1]{}

\newcommand{\xrightarrowdbl}[2][]{%
  \xrightarrow[#1]{#2}\mathrel{\mkern-14mu}\rightarrow
}

\title{Automatic cocommutativity for the quantum-group comonad}
\author{Alexandru Chirvasitu}

\begin{document}

\date{}

\newcommand{\Addresses}{{
  \bigskip
  \footnotesize

  \textsc{Department of Mathematics, University at Buffalo}
  \par\nopagebreak
  \textsc{Buffalo, NY 14260-2900, USA}  
  \par\nopagebreak
  \textit{E-mail address}: \texttt{achirvas@buffalo.edu}

}}

\maketitle

\begin{abstract}
  We prove that the cofree Hopf algebra on a cocommutative bialgebra is again cocommutative, and in fact coincides with the latter's largest Hopf subalgebra. This answers a question of Ardizzoni-Menini-Saracco and provides a proof for a statement to the same effect in the literature, whose original justification contains an apparent gap. The result relies on (and is in fact equivalent to) the observation that cocommutative bialgebra quotients of arbitrary Hopf algebras are automatically Hopf. This latter statement strengthens the analogue due to Newman, which assumes cocommutativity for the Hopf domain.

  One application of the above is the following quantum-rigidity result for Hopf algebras acted upon by a covariant quantum group: an action by one Hopf algebra on another which preserves the unital-algebra structure and the comultiplication factors through an action by a cocommutative Hopf algebra which moreover preserves the entire Hopf structure.
\end{abstract}

\noindent \emph{Key words:
  Tannaka reconstruction;
  cocommutative;
  cofree Hopf algebra;
  grouplike;
  invertible object;
  monoidal category;
  quantum automorphism group;
  rigid object
}

\vspace{.5cm}

\noindent{MSC 2020: 16T05; 16T15; 18A40; 16T20; 18M05; 18C15
  

}


\section*{Introduction}

The present note is primarily motivated by a question raised in passing in \cite{MR5016690} in the process of analyzing the functors left and right adjoint to the embedding of the category of Hopf algebras (over a field) in that of bialgebras. Given the automatic commutativity of the free Hopf algebra on a commutative bialgebra (\cite[Proposition 3.14]{MR5016690}, \cite[proof of Theorem 0.2]{2609.01541v2}), \cite[discussion preceding \S 3.3]{MR5016690} poses the natural dual question:

\begin{question}\label{qu:qc}
  Is the \emph{cofree Hopf algebra} \cite[Theorem 3.1(a)]{zbMATH05696924} on a cocommutative bialgebra again cocommutative?
\end{question}

That this is so is claimed in the earlier \cite[Remark 26]{porst_formal-2}, but that argument appears to contain a gap (see \cite[Remark 1.6(1) and Example 1.7]{2609.01541v2}). One of the main results provides a proof for that affirmative answer, in the following slightly sharper form.

\begin{theorem}\label{th:qc}
  \begin{enumerate}[(1),wide]
  \item\label{item:th:qc:cof.coc} For a cocommutative $\Bbbk$-bialgebra $B$, the cofree Hopf algebra $H(B)\xrightarrow{\iota_B} B$ is automatically cocommutative.

  \item\label{item:th:qc:cof.sub} Furthermore, the canonical map $\iota_B$ is the embedding of the largest Hopf subalgebra of $B$. 
  \end{enumerate}
\end{theorem}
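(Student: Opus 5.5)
The plan is to reduce both parts to the claim from the abstract: \emph{a cocommutative bialgebra quotient $H\to C$ of an arbitrary Hopf algebra $H$ is automatically Hopf}. I would prove that claim first and then deduce the theorem from the universal property of the cofree Hopf algebra $\iota_B\colon H(B)\to B$. This is the terminal bialgebra map from a Hopf algebra to $B$: every bialgebra map $K\to B$ with $K$ Hopf factors uniquely through $\iota_B$.

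Granting the claim, the deduction runs as follows. Let $C:=\iota_B(H(B))\subseteq B$ be the image. It is a subbialgebra of $B$, hence cocommutative, and it is a bialgebra quotient of the Hopf algebra $H(B)$. By the claim, $C$ is Hopf. The corestriction $H(B)\to C$ is surjective, and $C\hookrightarrow B$ is a bialgebra map from a Hopf algebra, so it factors as $C\to H(B)\to B$ by the universal property. Composing, $H(B)\to C\to H(B)$ is an endomorphism over $B$, so it is the identity by uniqueness. Hence $H(B)\to C$ is injective, and therefore an isomorphism. This gives part (1): $H(B)\cong C$ is cocommutative. It also shows $\iota_B$ is injective with image $C$.

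For part (2), take any Hopf subalgebra $K\subseteq B$. Its inclusion factors through $\iota_B$, so $K\subseteq C$. Since $C$ is itself a Hopf subalgebra, it is the largest one.

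It remains to prove the claim; this is the main obstacle. Let $\pi\colon H\to C$ be a surjective bialgebra map with $C$ cocommutative. The natural candidate antipode is one induced by $S_H$, but $\ker\pi$ need not be $S_H$-stable a priori, so that approach does not obviously work. I would instead argue comodule-theoretically, via Tannaka reconstruction (as the key words suggest). $C$ is Hopf iff finite-dimensional $C$-comodules are rigid in the monoidal category $\cM^C$, i.e.\ iff $\cM^C_{fd}$ is rigid (equivalently, closed under duals inside the category of $C$-comodules). Every $C$-comodule is an $H$-comodule via $\pi$, and $\cM^C\subseteq\cM^H$ is a full monoidal subcategory closed under subquotients. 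Any $V\in\cM^C_{fd}$ has an $H$-dual $V^*$; the task is to show $V^*$ is a $C$-comodule.

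The route I would try first is the following. Cocommutativity of $C$ makes $\cM^C$ symmetric via the flip, and the flip $V\otimes W\to W\otimes V$ is $H$-colinear for $V,W\in\cM^C$. Consider the evaluation $V^*\otimes V\to\Bbbk$ and coevaluation $\Bbbk\to V\otimes V^*$ in $\cM^H$. I would try to identify $V^*$ with a subquotient of some tensor power of $V$, built from symmetrized coevaluations $\Bbbk\to V\otimes V^*\cong V^*\otimes V$ (flip colinear once one knows enough), landing $V^*$ inside $C$-comodules. Concretely, I would first reduce via the coradical filtration to simple $C$-comodules, since a Hopf-ness criterion (Takeuchi) only needs invertibility on $\mathrm{corad}(C)$. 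Over $\bar\Bbbk$ these are related to pointed parts, and grouplikes $g\in C$ lift to invertible one-dimensional $H$-comodules, because $\pi$ of grouplike-generated simple subcoalgebras of $H$ hit them. The inverse of $g$ is then $\pi(S_H(\tilde g))$, which is grouplike in $C$. This is where cocommutativity is essential, since simple subcoalgebras of cocommutative $C$ over $\bar\Bbbk$ are one-dimensional. Finally I would descend from $\bar\Bbbk$ to $\Bbbk$ by base change, which commutes with everything in sight.
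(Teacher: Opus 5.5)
Your deduction of both parts from the claim is correct, and a bit more self-contained than the paper's argument, which for (2) invokes Newman's lemma and the a priori existence of a largest Hopf subalgebra. The real content, however, is the claim itself (cocommutative bialgebra quotients of Hopf algebras are Hopf), and your sketch of it has genuine gaps.

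First, your comodule functor points the wrong way. A surjection $\pi\colon H\to C$ does not make $C$-comodules into $H$-comodules. What it gives is the corestriction functor $\cM^H\to\cM^C$, $\rho\mapsto(\id\otimes\pi)\rho$. For instance, the weight-one character of the diagonal torus $T\le SL_2$ does not come from any $\mathcal{O}(SL_2)$-comodule. The picture ``$\cM^C\subseteq\cM^H$ full, monoidal, closed under subquotients'' describes a \emph{subcoalgebra}, and in that setting the conclusion is false: $\Bbbk[\mathbb N]\subset\Bbbk[\mathbb Z]$ is a cocommutative subbialgebra of a Hopf algebra that is not Hopf. So ``$V\in\cM^C_{fd}$ has an $H$-dual'' has no meaning, and the symmetrized-coevaluation strategy built on that picture cannot succeed.

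Second, grouplikes of $C$ need not lift to grouplikes of $H$. For $\mathcal{O}(SL_2)\twoheadrightarrow\mathcal{O}(T)\cong\Bbbk[t^{\pm1}]$, $t$ is grouplike, while $1$ is the only grouplike of $\mathcal{O}(SL_2)$. The simple subcoalgebra spanned by the matrix coefficients $a,b,c,d$ maps onto $\mathrm{span}\{t,t^{-1}\}$, which is not simple. For a non-grouplike preimage $\tilde g$, the element $\pi(S_H(\tilde g))$ is uncontrolled, because $\pi\circ S_H$ factoring through $\pi$ is exactly what has to be proved.

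The missing idea is to work only with corestrictions $V|_C$ of finite-dimensional $H$-comodules $V$.
\begin{enumerate}[(a)]
\item These do have duals $V^*|_C$, with $C$-colinear evaluation and coevaluation.
\item Every grouplike $g\in C$ occurs as a composition factor $D_g$ of some $V|_C$, since $\pi$ is onto.
\item Over $\bar\Bbbk$, cocommutativity splits $V|_C$ into $g$-primary blocks, so $D_g$ actually embeds in $V|_C$.
\item Nondegeneracy of $\mathrm{ev}$ then gives a nonzero $C$-colinear map $V^*\otimes D_g\to\Bbbk$. Hence there is a nonzero map $D_h\otimes D_g\to\Bbbk$ for some composition factor $D_h$ of $V^*|_C$, i.e.\ $hg=1$.
\item $\mathrm{coev}$ gives a right inverse in the same way.
\end{enumerate}
In the $SL_2$ example, $t^{-1}$ arises as a weight of the dual of the standard representation, not as the image of a lift of $t$. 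Your remaining ingredients are fine and agree with the paper: the base change to $\bar\Bbbk$, and Takeuchi's criterion reducing Hopfness of the pointed bialgebra $C$ to invertibility of its grouplikes.
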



\section{Hopf algebras maximal in and cofree on cocommutative bialgebras}\label{se:cofr.max}

Throughout, linear structures (algebras, coalgebras, bialgebras, etc.) are over a ground field $\Bbbk$ (occasionally satisfying additional constraints, to be specified where needed). 

\pf{th:qc}
\begin{th:qc}
  \begin{enumerate}[label={},wide]
  \item\textbf{\Cref{item:th:qc:cof.coc} $\Rightarrow$ \Cref{item:th:qc:cof.sub}} Recall first that a cocommutative bialgebra indeed \emph{has} a largest Hopf subalgebra (e.g. \cite[Remark 1.6(2)]{2609.01541v2}). Assuming \Cref{item:th:qc:cof.coc}, the cocommutative-domain map $\iota_B$ will factor through an embedding $B'\le B$ of a bialgebra quotient $H(B)\xrightarrowdbl{} B'$. The domain of that quotient being a cocommutative Hopf algebra, its bialgebra quotients are automatically Hopf \cite[Lemma 4.2]{MR374182}. $\iota_B$ is thus an embedding into the largest Hopf subalgebra of $B$, so also an isomorphism onto it by universality.
    
  \item\textbf{\Cref{item:th:qc:cof.coc}} Any bialgebra morphism $H\to B$ from a Hopf algebra will factor through a cocommutative quotient of $H$, automatically Hopf by \Cref{th:qq} (to which we relegate the bulk of the proof). That Hopf subalgebra of $B$ (cocommutative, since the latter is) thus has the requisite universality property and must be $H(B)$. 
  \end{enumerate}
\end{th:qc}

\Cref{th:qq} improves on the already-cited \cite[Lemma 4.2]{MR374182} (also \cite[Theorem 1(vi)]{nic}): not only are bialgebra quotients $H\xrightarrowdbl{} B$ of cocommutative Hopf algebras automatically Hopf, but this is so even if one only assumes $B$ (but not $H$) cocommutative.

\begin{theorem}\label{th:qq}
  A cocommutative bialgebra quotient of a Hopf algebra over a field is again Hopf. 
\end{theorem}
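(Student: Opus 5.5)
The plan is to reduce to the pointed case by extending scalars, and then use the classical criterion that a pointed bialgebra whose grouplikes are invertible is Hopf. Let $H\xrightarrowdbl{\pi} B$ be the quotient map, with $B$ cocommutative.

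\textbf{Step 1 (base change).} Let $K=\overline{\Bbbk}$. Then $H_K:=H\otimes K$ is Hopf, $\pi_K:H_K\xrightarrowdbl{}B_K$ is a bialgebra surjection, and $B_K$ is cocommutative. I first check that the Hopf property of $B_K$ descends to $B$.
\begin{itemize}
\item Write $B$ as the union of its finite-dimensional subcoalgebras $C$.
\item $B$ is Hopf iff each restriction $\id|_C$ is convolution-invertible in $\mathrm{Hom}(C,B)$. The local inverses are unique, so they glue to a global antipode.
\item For finite-dimensional $C$ we have $\mathrm{Hom}(C,B)\otimes K\cong \mathrm{Hom}(C_K,B_K)$.
\item Solvability of the linear equations $f*\id|_C=\varepsilon 1=\id|_C*f$ therefore passes from $K$ down to $\Bbbk$.
\end{itemize}
So I may assume $\Bbbk$ is algebraically closed. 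Then $B$, being cocommutative, is pointed: its coradical is $\Bbbk G$ for $G$ the grouplike elements of $B$.

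\textbf{Step 2 (grouplikes are invertible).}
\begin{itemize}
\item For a surjective coalgebra map, the coradical of the target is contained in the image of the coradical of the source (e.g. Montgomery, Cor.~5.3.5). Hence each $g\in G$ lies in $\pi(C)$ for some simple subcoalgebra $C\le H$.
\item Since $\pi(C)$ is a nonzero image of a simple coalgebra, it is simple. As it contains $g$, it equals $\Bbbk g$, so $\pi(c)=\varepsilon(c)g$ for all $c\in C$.
\item Pick $x\in C$ with $\varepsilon(x)=1$. Using $\Delta(C)\subseteq C\otimes C$,
\[
1=\pi(x_1S(x_2))=\pi(x_1)\pi(S(x_2))=g\,\pi(S(x)),
\]
and symmetrically $\pi(S(x_1))\pi(x_2)=\pi(S(x))\,g=1$.
\end{itemize}
Thus every grouplike of $B$ is invertible, with inverse $\pi(S(x))$. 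These inverses are automatically grouplike, so the coradical $B_0=\Bbbk G$ is a Hopf subalgebra of $B$.

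\textbf{Step 3 (conclude).} I then invoke Takeuchi's criterion: if the restriction $\id|_{B_0}$ of $\id_B$ to the coradical is convolution-invertible, then $\id_B$ is convolution-invertible.
\begin{itemize}
\item Here $\id|_{B_0}$ is invertible, with inverse $g\mapsto g^{-1}$.
\item The general mechanism is that an element of $\mathrm{Hom}(B,B)$ whose restriction to $B_0$ is invertible is invertible. One writes it as $\varepsilon 1-\phi$ with $\phi$ vanishing on $B_0$, and then $\sum_n\phi^{*n}$ is locally finite by the coradical filtration.
\end{itemize}
In the pointed case this can also be made explicit: compose $\id$ with the invertible map $b\mapsto$ (grouplike-twisted version), then use local nilpotence on $\ker\varepsilon$ relative to the filtration. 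Hence $B$ is Hopf.

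The main obstacle I expect is Step~2, specifically making sure every grouplike of $B$ is the image of a simple subcoalgebra of $H$ mapping onto $\Bbbk g$. This is precisely where the surjectivity of $\pi$ and the lifting of coradicals are used. Cocommutativity of $B$ enters only to guarantee pointedness after base change. The remaining steps are standard.
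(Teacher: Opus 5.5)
Your Steps~1 and~3 are fine. The descent of the antipode from $\overline{\Bbbk}$ to $\Bbbk$ works, and so does Takeuchi's coradical criterion. The gap is in Step~2, at the claim that $\pi(C)$ is simple because it is a nonzero image of a simple coalgebra. That is false in general. Quotient coalgebras of $M_n(\Bbbk)^*$ are dual to unital subalgebras of $M_n(\Bbbk)$, triangular ones for instance, and this already happens under the hypotheses of the theorem.

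Take $H=\mathcal{O}(GL_2)$ and let $\pi$ be restriction to the abelian subgroup $M$ of matrices $\left(\begin{smallmatrix}s&t\\0&s\end{smallmatrix}\right)$. Then $B=\Bbbk[s^{\pm1},t]$, with $s$ grouplike and $\Delta t=s\otimes t+t\otimes s$, is cocommutative.
\begin{itemize}
\item The simple subcoalgebra $C=\mathrm{span}(a,b,c,d)$ has $\pi(C)=\mathrm{span}(s,t)\supsetneq\Bbbk s$.
\item For $x=a+b\in C$ we have $\varepsilon(x)=1$, yet with $g=s$ one gets $g\,\pi(S(x))=1-ts^{-1}\neq 1$.
\item No better choice of $C$ helps. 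If $\pi(C_L)=\Bbbk g$ for the coefficient coalgebra of a simple comodule $L$, then $M$ acts on $L$ by a character. So the unipotent subgroup acts trivially, hence so does its normal closure $SL_2$. This forces $L=\det^m$ and $g=s^{2m}$.
\end{itemize}
Thus no simple subcoalgebra of $H$ maps onto $\Bbbk s$, even though $s$ is invertible. The identity $\pi(c)=\varepsilon(c)g$ that your inverse $\pi(S(x))$ relies on is simply not available.

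The missing idea is a use of cocommutativity beyond pointedness. Your closing remark, that cocommutativity enters only through pointedness, is exactly where the argument goes astray. After base change, every finite-dimensional $H$-comodule $V$ corestricted to $B$ splits as $\bigoplus_g V_g$ with each $V_g$ $g$-primary. Some $V$ has $V_g\neq 0$: take a finite-dimensional subcoalgebra $C'\le H$ with $g\in\pi(C')$. The paper then argues as follows:
\begin{itemize}
\item The evaluation $V^*\otimes V_g\to\Bbbk$ is nonzero, so it yields a nonzero $B$-colinear map $D_h\otimes D_g\to\Bbbk$, which forces $hg=1$.
\item Coevaluation handles the other side.
\end{itemize}
Your own formula can also be rescued with this input. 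Pick a basis $v_1,\dots,v_n$ adapted to a $B$-flag of $V$ with $V_g$ placed first. Writing $\rho(v_j)=\sum_i v_i\otimes c_{ij}$, one has $\pi(c_{ij})=0$ for $i>j$ and $\pi(c_{11})=g$. Applying $\pi$ to $\sum_k S(c_{1k})c_{k1}=1$ then leaves $\pi(S(c_{11}))\,g=1$. Placing $V_g$ last instead gives $\pi(c_{nn})=g$, and $\sum_k c_{nk}S(c_{kn})=1$ collapses to $g\,\pi(S(c_{nn}))=1$.
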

\begin{proof}
  Fix a quotient $H\xrightarrowdbl{\pi}B$ as in the statement. The invariance of a bi-ideal $\ker\pi\trianglelefteq H$ under the antipode can be checked after extending scalars to any convenient larger field, so there is no loss in assuming $\Bbbk$ algebraically closed. $B$ is then \emph{pointed} (i.e. its simple subcoalgebras are 1-dimensional) by \cite[Lemma 8.0.1(c)]{swe}, whence its being a Hopf algebra precisely when its grouplike elements are invertible \cite[Proposition 9.2.5]{swe}.

  Per standard Tannakian machinery (e.g. \cite[Theorem 2.4.2]{schau_tann}), the statement amounts to the claim that for any finite-dimensional $H$-comodule $V\in \cM^H_f$ the (automatically 1-dimensional) simple subquotients of $V$ regarded as a right $B$-comodule by corestriction along $\pi$ are \emph{rigid} \cite[Definition 2.10.11]{egno} in $\cM^B$. Indeed, every simple $B$-comodule is realizable as some such subquotient by \cite[Exercise (4) preceding Lemma 9.0.1]{swe}, say.
  
  By \cite[Theorem 8.0.5(c)]{swe} we have a decomposition
  \begin{equation*}
    V=\bigoplus_{g} V_g
    ,\quad
    g\in B\text{ grouplike}
    ,\quad
    V_g\text{ \emph{$g$-primary}},
  \end{equation*}
  the latter meaning that $V_g$ is a successive extension of copies of the 1-dimensional $D_g\in \cM^B$ attached to the grouplike $g$. Denoting by $V^*\in \cM^H$ the left dual, the evaluations $V^*\otimes V_g\to \Bbbk$ will be non-zero by the non-degeneracy of the duality pairing, and with them also $V^*\otimes D_g\to \Bbbk$. This makes $g$ left-invertible: for some 1-dimensional subquotient $D_h\le V^*$ in $\cM^B$ the non-zero map $D_h\otimes D_g\to \Bbbk$ translates to $hg=1$. Right invertibility (so invertibility period) follows:
  \begin{equation*}
    \begin{tikzpicture}[>=stealth,auto,baseline=(current  bounding  box.center)]
      \path[anchor=base] 
      (0,0) node (l) {$D_g$}
      +(3,.5) node (u) {$V\otimes V^*\otimes D_g$}
      +(6,0) node (r) {$V$}
      ;
      \draw[->] (l) to[bend left=6] node[pos=.5,auto] {$\scriptstyle \mathrm{coev}_V\otimes\id$} (u);
      \draw[->] (u) to[bend left=6] node[pos=.5,auto] {$\scriptstyle \id\otimes\mathrm{ev}$} (r);
      \draw[right hook->] (l) to[bend right=6] node[pos=.5,auto,swap] {$\scriptstyle \text{embedding}$} (r);
    \end{tikzpicture}
  \end{equation*}
  commutes, hence a non-vanishing component $\Bbbk\to V_g\otimes V^*$ for $\Bbbk\xrightarrow{\mathrm{coev}}V\otimes V^*$ and with it a non-vanishing map $\Bbbk \to D_g\otimes V^*$ to which the preceding argument applies.
\end{proof}

We pause also to note the following consequence on (covariant) \emph{quantum automorphism groups} of Hopf algebras. The dual counterpart \cite[Theorem 0.2(2)]{2609.01541v2}, pertaining to \emph{contra}variant automorphism groups (i.e. coacting Hopf algebras), in turn builds on the dual to \Cref{th:qc}\Cref{item:th:qc:cof.coc}: the automatic commutativity of the Hopf envelope of a commutative bialgebra. 

\begin{corollary}\label{cor:univ.coact.fact.cocom}
  An action $B\otimes K\to K$ of a Hopf algebra on another, $(K,\eta,\mu,\varepsilon,\Delta,S)$, preserving the unit, multiplication and comultiplication factors through an action by a cocommutative Hopf quotient $B\xrightarrowdbl{}B'$ $B'\otimes K\to K$ preserving the entire Hopf structure.
\end{corollary}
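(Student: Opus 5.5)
The plan is to replace $B$ by a suitable bialgebra quotient $B'$ through which the action factors, prove $B'$ cocommutative, invoke \Cref{th:qq} to conclude that $B'$ is Hopf, and finally check that the $B'$-action respects the antipode $S$ of $K$. Concretely, I take the hypotheses to mean that $K$ is simultaneously a left $B$-module algebra ($b\cdot 1=\varepsilon(b)1$, $b\cdot(kl)=(b_1\cdot k)(b_2\cdot l)$) and a left $B$-module coalgebra ($\Delta(b\cdot k)=b_1\cdot k_1\otimes b_2\cdot k_2$). I write $\varepsilon_K$ for the counit of $K$.

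\textbf{Step 1 (the quotient).} Each tensor power $K^{\otimes n}$ is a $B$-module via the iterated comultiplication of $B$. Let $I\trianglelefteq B$ be the sum of all bi-ideals of $B$ annihilating every $K^{\otimes n}$, $n\ge 0$. A sum of bi-ideals is again a bi-ideal, so $I$ is the largest such bi-ideal, and $B':=B/I$ is a bialgebra quotient acting on $K$ with the same compatibilities.

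\textbf{Step 2 (cocommutativity of $B'$).} Expanding $\Delta(b\cdot(kl))$ in two ways gives
\begin{equation*}
  (b_1\cdot k_1)(b_2\cdot l_1)\otimes(b_3\cdot k_2)(b_4\cdot l_2)
  =
  (b_1\cdot k_1)(b_3\cdot l_1)\otimes(b_2\cdot k_2)(b_4\cdot l_2).
\end{equation*}
The idea is to extract $b_1\otimes b_2=b_2\otimes b_1$ modulo $I$ from this identity. To do so, I would specialize, feeding $1$ into the left-hand slots and applying $\varepsilon_K$ in suitable tensor factors, so that only the middle legs $b_2,b_3$ survive. Alternatively, I would show directly that the span of the coefficients of $\Delta(b)-\Delta^{\mathrm{op}}(b)$ generates a bi-ideal annihilating all $K^{\otimes n}$, so that it lies in $I$. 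A cleaner route would be Tannakian: the identity says that the flip on the comultiplication is compatible with the action, so $\Delta$ and $\Delta^{\mathrm{op}}$ induce the same action on the category generated by $K$; faithfulness of $B'$ on that category then forces $\Delta=\Delta^{\mathrm{op}}$ in $B'$. I expect this step to be the main obstacle, since the identity is only an equality of actions and the unit and counit specializations tend to collapse it. If a direct specialization fails, I would first try to realize the relevant subquotients of $B'$ as coefficient coalgebras of finite-dimensional pieces of $K^{\otimes n}$ and argue there.

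\textbf{Step 3 (Hopf, and antipode preservation).} \Cref{th:qq} makes $B'$ Hopf, since it is a cocommutative bialgebra quotient of the Hopf algebra $B$. For compatibility with $S$, work in the convolution algebra $\mathrm{Hom}(B'\otimes K,K)$, with $B'\otimes K$ carrying the tensor-product coalgebra structure. The action $\alpha(b\otimes k)=b\cdot k$ has convolution inverse $b\otimes k\mapsto S(b\cdot k)$, because the action is a coalgebra map and $b\cdot 1=\varepsilon(b)1$. Next, using cocommutativity of $B'$ together with the module-algebra property, the map $b\otimes k\mapsto b\cdot S(k)$ is also a convolution inverse of $\alpha$. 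By uniqueness of inverses the two coincide, so the $B'$-action preserves the whole Hopf structure.
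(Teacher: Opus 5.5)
Your skeleton matches the paper's: you factor the action through a cocommutative bialgebra quotient and then apply \Cref{th:qq}. The paper, however, takes the cocommutativity from \cite[Theorem 0.2(3)]{2609.01541v2}, and your Step 2, which is meant to replace that citation, contains no actual argument. The specializations you suggest carry no information. Setting $k=1$ turns both sides of your identity into $b_1\cdot l_1\otimes b_2\cdot l_2$, and likewise for $l=1$. Applying $\varepsilon_K$ to either tensorand turns both sides into $(b_1\cdot k)(b_2\cdot l)$. Nor does the identity say that the flip $\tau\colon K\otimes K\to K\otimes K$ is $B$-linear. It only says that $b_1\otimes b_2\otimes b_3\otimes b_4$ and $b_1\otimes b_3\otimes b_2\otimes b_4$ act identically on $\Delta(k)\otimes\Delta(l)$ \emph{after} applying $(\mu\otimes\mu)(\id\otimes\tau\otimes\id)$, and you give no way of cancelling these maps. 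The faithfulness half of your Tannakian route is fine: your $I$ is exactly the annihilator of $\bigoplus_n K^{\otimes n}$, so a $B$-linear $\tau$ would indeed put $\Delta(b)-\Delta^{\mathrm{op}}(b)$ in $I\otimes B+B\otimes I$.

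The real obstruction is that Step 2 never uses the antipode of $K$, and for mere bialgebras the conclusion is false. Let $G=S_3$, $s=(12)$, $t=(13)$. Let $K$ be $\Bbbk\langle g,h,x,y\rangle$ modulo the span of the words in which some $g$ occurs to the left of some $y$, or some $h$ to the left of some $x$. Take $g,h$ grouplike, $\Delta x=x\otimes 1+g\otimes x$ and $\Delta y=y\otimes 1+h\otimes y$. That span is a bi-ideal. Grade by $\deg g=\deg h=e$, $\deg x=s$, $\deg y=t$; this makes $K$ $G$-graded both as an algebra and as a coalgebra. Indeed, in $\Delta$ of a word, any term in which an $x$ contributes its right leg and a later $y$ its left leg has a $g$ before a $y$ in the left tensorand, so it vanishes; the same holds for $h$ and $x$. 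In the surviving terms, rearranging into (left tensorand)(right tensorand) only commutes letters of equal degree. Hence $\Bbbk^{G}$ acts on $K$, through the grading, preserving unit, multiplication, comultiplication and counit. The words in $x,y$ realize every element of $G=\langle s,t\rangle$ as a degree, so the action is faithful and factors through no proper quotient of the non-cocommutative $\Bbbk^{G}$. Your identity of course holds here, yet $\tau\colon K_s\otimes K_t\to K_t\otimes K_s$ is not $\Bbbk^{G}$-linear.

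So the missing ingredient is exactly the mechanism by which the Hopf structure of $K$ forces cocommutativity; you must either supply it or cite it as the paper does. A smaller point in Step 3: counit preservation is not among the hypotheses, but it is needed for $S\circ\alpha$ to invert $\alpha$. It does follow, from $\alpha=(\varepsilon_K\circ\alpha)*\alpha$ together with invertibility of $\alpha$. The inverse $b\otimes k\mapsto b\cdot S(k)$ is two-sided by the module-algebra property alone, so no cocommutativity is needed there.
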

\begin{proof}
  It so factors through a cocommutative \emph{bialgebra} quotient by \cite[Theorem 0.2(3)]{2609.01541v2}, automatically Hopf by \Cref{th:qq}. 
\end{proof}

\begin{remarks}\label{res:rev.impl}
  \begin{enumerate}[(1),wide]
  \item \Cref{th:qc} appeals to \Cref{th:qq}; the logical dependence can be reversed: taking the former for granted instead, I claim the latter follows. Indeed, a cocommutative bialgebra quotient $H\xrightarrowdbl{}B$ of a Hopf algebra will factor through the cofree Hopf algebra $H(B)\xrightarrow{\iota_B}B$, assumed cocommutative and an embedding; surjectivity then ensures $\iota_B$ is an isomorphism.

  \item One effect of the above-noted equivalence between \Cref{th:qc} and \Cref{th:qq} is to bring the former in scope for scalar extension: the latter proof is unproblematic in that regard, whereas cofree constructions tend to raise difficulties \cite[Theorem 2.9]{2407.20154v3} under non-algebraic field extensions. 
  \end{enumerate}
\end{remarks}


\addcontentsline{toc}{section}{References}

\def\polhk#1{\setbox0=\hbox{#1}{\ooalign{\hidewidth
  \lower1.5ex\hbox{`}\hidewidth\crcr\unhbox0}}}


\Addresses

\end{document}